\newtheorem{theorem}{Theorem}[section]
\newtheorem{proposition}[theorem]{Proposition}
\newtheorem{corollary}[theorem]{Corollary}
\newtheorem{lemma}[theorem]{Lemma}
\newtheorem{algorithm}[theorem]{Algorithm}
\newtheorem*{definition*}{Definition}
\newcommand{\QQ}{\mathbb Q}
\newcommand{\ZZ}{\mathbb Z}
\newcommand{\NN}{\mathbb N}
\newcommand{\BB}{\mathcal B}
\newcommand{\norm}[1]{\operatorname{norm}{(#1)}}
\newcommand{\cl}[1]{\operatorname{cl}(#1)}
\newcommand{\ord}{\mathcal{O}}
\newcommand{\FF}[1]{\smash{\mathbb F_{#1}}}
\newcommand{\E}{\mathcal E}
\newcommand{\End}{\operatorname{End}}
\newcommand{\pp}{\mathfrak p}
\newcommand{\ff}[1]{\mathfrak f(#1)}
\newcommand{\aaa}{\mathfrak a}
\newcommand{\ZFV}{\ZZ[\pi,\overline\pi]}
\newcommand{\IOZFV}{[\ord_K:\ZFV]}
\newcommand{\Mat}[2]{\operatorname{Mat}_{#1}(#2)}
\newcommand{\Res}[2]{\operatorname{Res}_{#1}(#2)}
\title{Computing endomorphism rings\\of elliptic curves under the GRH}
\author{Gaetan Bisson}
\date{\small
   LORIA, 54506 Vand\oe{}uvre-l\`{e}s-Nancy, France
\\ TU/e, 5600 MB Eindhoven, The Netherlands}
\begin{document}

\maketitle

\begin{abstract}

We design a probabilistic algorithm for computing endomorphism rings of
ordinary elliptic curves defined over finite fields that we prove has a
subexponential runtime in the size of the base field, assuming solely the
generalized Riemann hypothesis.

Additionally, we improve the asymptotic complexity of previously known,
heuristic, subexponential methods by describing a faster isogeny-computing
routine.

\end{abstract}

\section{Introduction}

Endomorphism rings of ordinary elliptic curves over finite fields are central
objects in complex multiplication (CM) theory; as such, they appear in various
computational number-theoretic contexts. For instance, the CM method for
generating curves with a prescribed number of points relies on evaluating
so-called Hilbert class polynomials, for which the state-of-the-art algorithm
of \cite{sutherland-hilbert} requires an endomorphism-ring-computing
subroutine. They are also potentially relevant security parameters in certain
cryptographic applications.

They were first studied by Kohel \cite{kohel} who, assuming the generalized
Riemann hypothesis (GRH), gave a deterministic method for computing them in
time $O(q^{1/3+\epsilon})$ where $q$ is the cardinality of the base field.
Recently, a probabilistic algorithm with subexponential complexity in $\log q$
was obtained in \cite{bisson-sutherland} by relying on several additional
assumptions; its runtime is
\[
L(q)^{\sqrt{3}/2+o(1)}
\qquad\text{where}\qquad
L(x)=\exp\sqrt{\log x\log\log x}.
\]

Here, we describe a variant of this method that computes endomorphism rings in
proven probabilistic subexponential time, assuming only the GRH; it ``ascends''
the lattice of orders in a generic manner, and ``tests'' orders using their
class group structure. The lattice-ascending procedure is suited to work in
general number fields, which is a necessary step for generalizing this
algorithm to higher-dimensional abelian varieties; for now, only the method of
Eisentr\"{a}ger and Lauter \cite{eisentrager-lauter} and that of Wagner
\cite{wagner} apply to this setting but they are both of exponential nature. To
prove the complexity of the order-testing method, we adapt material from Seysen
\cite{seysen} and proofs due to Hafner and McCurley \cite{hafner-mccurley} to
make use of a sharp bound derived from the GRH by Jao, Miller, and Venkatesan
\cite[Corollary~1.3]{expander-grh}.

Additionally, we use a more direct, faster isogeny-computing routine than
\cite{bisson-sutherland} which allows us to bring down the exponent in the
complexity. Explicitly, on input an ordinary elliptic curve $\E$ defined over a
finite field $\FF{q}$ our main algorithm outputs the structure of its
endomorphism ring $\End\E$ in proven (under the GRH) probabilistic time
\[
L(q)^{1+o(1)}+L(q)^{1/\sqrt{2}+o(1)}
\]
where the first term only accounts for the cost of factoring of a certain
integer less than $4q$ using the state-of-the-art proven method of Lenstra and
Pomerance \cite{lenstra-pomerance}; in other words, apart from that
factorization, we were able to adapt and prove under the GRH all parts of the
heuristic subexponential method above while improving its asymptotic
complexity.

Section~\ref{sec:context} fixes notations on endomorphism rings and orders.
Section~\ref{sec:relation} then presents the order-testing method using ``relations''.
Section~\ref{sec:iso} gives the direct-but-fast isogeny-computing routine.
Section~\ref{sec:ascend} describes our lattice-ascending procedure and main algorithm.
Section~\ref{sec:class} proves that class groups are characterized by short relations.
Section~\ref{sec:small} finally shows how orders are determined by their class groups.

\section{Background}\label{sec:context}

Let $\E$ be an ordinary elliptic curve defined over a finite field $\FF{q}$.
The Frobenius endomorphism $\pi$ acts on geometric points of $\E$ by raising
their coordinates to the $q$\textsuperscript{th} power; its characteristic
polynomial $\chi_\pi(x)$ is of the form $x^2-tx+q$ and computing the integer
$t$ is equivalent to finding the number of points on the curve, namely
$\chi_\pi(1)$. Schoof showed in \cite{schoof} how this can be done in
deterministic polynomial time in the size of the base field, $\log q$.

Many endomorphisms stem from the Frobenius endomorphism, as Deuring proved in
\cite{deuring} that $\QQ\otimes\End\E\simeq\QQ(\pi)$. Since the number field
$K=\QQ[x]/(\chi_\pi(x))$ is isomorphic to $\QQ(\pi)$, by computing the trace
$t$ we have already determined the endomorphism ring ``up to fractions''. From
now on, we make this isomorphism implicit by setting $\pi=x$.\footnote{The
conjugate of $x$ might equivalently be taken as $\pi$; this choice just needs
to be made once and for all.}

The number field $K$ is called the CM field of $\E$; the implicit isomorphism
maps $\End\E$ to an order in $K$ so we have
\[
\ZZ[\pi]\subseteq\End\E\subseteq\ord_K
\]
where $\ord_K$ is the ring of integers of $K$. Conversely, Waterhouse proved in
\cite[Theorem~4.2]{waterhouse} that all orders containing $\ZZ[\pi]$ arise as
endomorphism rings. The index $[\ord_K:\ZZ[\pi]]$ is essentially the square
part of the discriminant $\Delta=t^2-4q$; this measures how broad the
search-range is: in the worst case, it can be exponential (in $\log q$).

The orders of $K$ containing $\ZZ[\pi]$ form a finite lattice (in the
set-theoretic sense) where $\ord_K$ is the maximal order, $\ZZ[\pi]$ the
minimal one, and $\End\E$ lies in between. Unfortunately it might have
exponentially many orders so we need to devise a better way of finding $\End\E$
than testing each in turn; this is the purpose of the lattice-ascending
algorithm of Section~\ref{sec:ascend} which tests only polynomially many
orders. For those orders $\ord$, we ``test'' whether $\ord\subseteq\End\E$ with
the methodology of Section~\ref{sec:relation} which we develop in
Sections~\ref{sec:class} and \ref{sec:small}.

\section{The CM approach}\label{sec:relation}

We now present the approach of \cite{bisson-sutherland} to testing whether
$\ord\subseteq\End\E$ in a somewhat more abstract flavor. For the theory of
imaginary quadratic orders, we refer to \cite{cox}.

\bigskip

In this paper, it is implicitly understood that we exclusively consider ideals
of norm coprime to $\Delta$, so that their images in $\ZZ[\pi]$ are unramified
and invertible. Since every (invertible) ideal class of each order containing
$\ZZ[\pi]$ has a representative of this type, this has no effect on our use of
class groups, which arises from the following result of CM theory.

\begin{theorem}\label{th:cm}
When $\aaa$ is an ideal of $\End\E$, denote by $\phi_\aaa$ the isogeny with
kernel $\bigcap_{\alpha\in\aaa}\ker\alpha$. The ideal class group $\cl\ord$
acts faithfully and transitively on the set of isomorphism classes of elliptic
curves with endomorphism ring $\ord$ by $\aaa:\E\mapsto\phi_\aaa(\E)$.
\end{theorem}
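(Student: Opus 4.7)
The plan is to follow the classical template of CM theory in four stages: constructing the action, showing it preserves the endomorphism ring, descending to the class group, and establishing transitivity and faithfulness.

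First, since we restrict to ideals coprime to the discriminant $\Delta$, they are invertible as $\ord$-ideals; the subgroup $\E[\aaa] = \bigcap_{\alpha \in \aaa} \ker \alpha$ is then finite of order $N(\aaa)$ and $\phi_\aaa$ is a well-defined separable isogeny of this degree. Because $\E[\aaa]$ is $\ord$-stable, each $\alpha \in \ord$ descends through $\phi_\aaa$ to an endomorphism of $\phi_\aaa(\E)$, giving an embedding $\ord \hookrightarrow \End \phi_\aaa(\E)$. The reverse inclusion is the delicate part: using that $\aaa \bar{\aaa} = (N(\aaa))$ is principal when $\aaa$ is invertible, one pulls any $\beta \in \End \phi_\aaa(\E)$ back through the dual isogeny and shows that the resulting element of $\QQ \otimes \ord$ actually lies in $\ord$ itself.

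Next, the composition law $\phi_{\aaa\bbb} \simeq \phi_\bbb \circ \phi_\aaa$ yields a group action of invertible ideals, and principal ideals $(\alpha)$ act trivially since $\phi_{(\alpha)} = \alpha$ is an endomorphism; hence the action descends to $\cl{\ord}$. For transitivity, given $\E, \E'$ both with endomorphism ring $\ord$, any isogeny $\phi \colon \E \to \E'$ (one exists because both curves share the Frobenius $\pi$) has an $\ord$-stable kernel $K$; setting $\aaa = \{\alpha \in \ord : \alpha|_K = 0\}$ recovers $K = \E[\aaa]$ and hence $\phi_\aaa(\E) \simeq \E'$. For faithfulness, if $\phi_\aaa(\E) \simeq \E$ then composing with the isomorphism produces an endomorphism $\alpha \in \ord$ with $\ker \alpha = \E[\aaa]$, from which comparison of degrees forces $\aaa = (\alpha)$.

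The main obstacle I anticipate is the reverse inclusion $\End \phi_\aaa(\E) \subseteq \ord$ in the first step. It fails for non-invertible ideals, and is the fundamental reason for the ``coprime to $\Delta$'' convention adopted at the start of the section. A clean treatment requires either a local Tate-module analysis at each prime $\ell \mid N(\aaa)$, or a global argument exploiting that invertible $\ord$-ideals are locally principal, which suffices to ensure that no new endomorphisms appear in the quotient.
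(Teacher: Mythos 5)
The paper does not prove this theorem: it is imported as a standard result of CM theory (the Deuring correspondence), with the reader implicitly referred to \cite{cox} and the works of Deuring and Waterhouse cited in Section~2. Your outline is precisely the classical argument from those sources, and its architecture --- construction of $\E[\aaa]$, preservation of the endomorphism ring, descent to $\cl\ord$, transitivity via kernel ideals, freeness via degree comparison --- is correct.

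Two steps carry essentially all the content and should not be passed over as quickly as your sketch does. First, in the transitivity argument, the existence of \emph{some} isogeny $\phi\colon\E\to\E'$ is Tate's isogeny theorem (both curves have the same characteristic polynomial of Frobenius), and the assertion that $K=\ker\phi$ equals $\E[\aaa]$ for $\aaa=\{\alpha\in\ord:\alpha|_K=0\}$ --- i.e.\ that every such isogeny is a ``kernel isogeny'' with $\aaa$ an invertible $\ord$-ideal --- is exactly Waterhouse's Theorem~4.5 and is a theorem, not a formality; it uses the hypothesis that \emph{both} endomorphism rings equal $\ord$, and in general one first factors $\phi$ to reduce to degrees coprime to the conductor. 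Second, the reverse inclusion $\End\phi_\aaa(\E)\subseteq\ord$, which you rightly single out as the delicate point, is indeed where invertibility of $\aaa$ (equivalently, norm coprime to the conductor, which is the reason for the paper's ``coprime to $\Delta$'' convention) enters via $\aaa\overline{\aaa}=(\norm\aaa)$ or a local analysis. With those two points supplied from the literature, your proof is complete and is the proof the paper is tacitly relying on.
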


Intuitively, the structure of the class group dictates that of the isogeny
graph; hence, by looking at the latter, we might deduce things on the former
and obtain information about the endomorphism ring. This action is effective,
as embodied in Proposition~\ref{prop:iso}. In this setting, we formalize the
notion of ``structure'' by the following concept.

\begin{definition*}
We define \emph{relations} as multisets of ideals of $\ZZ[\pi]$. We say that a
relation $R$ \emph{holds in an order $\ord$} (or that it \emph{is a relation of
$\ord$}) if the product $\prod_{\aaa\in R}\aaa\ord$ is trivial in $\cl\ord$;
we say that it \emph{holds in the isogeny graph} if the composition of the
isogenies $\phi_{\aaa\End\E}$ for $\aaa\in R$ fixes $\E$.
\end{definition*}

The theorem implies that a relation holds in $\End\E$ if and only if it holds
in the isogeny graph, which gives a way to tell the endomorphism ring apart
from other orders of the lattice (we will see in the next section that
$\phi_{\aaa\End\E}$ can be computed without knowing $\End\E$).

To avoid testing all orders, we rely on this simple result from
\cite[Chapter~7]{cox}:

\begin{lemma}\label{lemma:uprel}
If a relation holds in some order, it also does in all orders containing it.
\end{lemma}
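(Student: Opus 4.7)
The plan is to construct a natural homomorphism $\cl\ord\to\cl\ord'$ whenever $\ord\subseteq\ord'$ are two orders containing $\ZZ[\pi]$, namely the extension-of-ideals map sending the class of an invertible fractional $\ord$-ideal $I$ to the class of $I\ord'$. Once this is established, the statement is immediate: applying this homomorphism to the hypothesis that $\prod_{\aaa\in R}\aaa\ord$ is trivial in $\cl\ord$ yields that its image $\prod_{\aaa\in R}(\aaa\ord)\ord'=\prod_{\aaa\in R}\aaa\ord'$ is trivial in $\cl\ord'$, since $(\aaa\ord)\ord'=\aaa\ord'$ using that $\ord'$ is a ring containing $\ord$.

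To see that the extension map is a well-defined group homomorphism, I would check three elementary points. First, it is multiplicative: $(IJ)\ord' = IJ\ord'\ord'=(I\ord')(J\ord')$, again because $\ord'$ is a ring. Second, it preserves invertibility: if $II^{-1}=\ord$ then $(I\ord')(I^{-1}\ord')=\ord\ord'=\ord'$, so $I\ord'$ is an invertible $\ord'$-ideal with inverse $I^{-1}\ord'$. Third, it sends principal ideals to principal ideals, since $(\alpha\ord)\ord'=\alpha\ord'$. These three properties together guarantee that the map descends to a homomorphism on the ideal class groups (defined, per the convention adopted at the beginning of Section~\ref{sec:relation}, on invertible classes with representatives of norm coprime to $\Delta$, so that $\aaa\ord$ and $\aaa\ord'$ are automatically invertible).

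There is really no serious obstacle here; the result is a purely formal consequence of the fact that extension-of-scalars commutes with multiplication and preserves principality. The only mild subtlety worth mentioning explicitly is the invertibility condition, which is why the paper's restriction to ideals of norm coprime to $\Delta$ is needed so that the class group operations behave as expected in the non-maximal orders $\ord$ and $\ord'$.
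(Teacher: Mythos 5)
Your proof is correct and is essentially the standard argument that the paper's citation to \cite[Chapter~7]{cox} points to (the paper itself gives no proof): the extension map $I\mapsto I\ord'$ is multiplicative, preserves invertibility and principality, hence induces a homomorphism $\cl\ord\to\cl{\ord'}$ under which a trivial product stays trivial. You also correctly flag the one genuine subtlety, namely that restricting to ideals of norm coprime to $\Delta$ guarantees invertibility in every order of the lattice, so nothing is missing.
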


Intuitively, as we ascend the lattice of orders, more and more relations hold,
which also translates into class groups getting smaller. This is why we chose
$\ZZ[\pi]$ to be the ring of our ideals: via the morphism $\aaa\mapsto\aaa\ord$
we can map ideals of $\ZZ[\pi]$ to any order above in a way that induces
surjective morphisms of class groups.

To search for the endomorphism ring $\End\E$ in the lattice, we will ``test''
whether orders $\ord$ lie below it by selecting relations of them and checking
whether they hold in the isogeny graph. Before we describe that procedure in
detail, let us mention how to compute isogenies.

\section{Computing the CM action}\label{sec:iso}

To make use of Theorem~\ref{th:cm}, we need to work with \emph{isomorphism
classes} of elliptic curves; for this, we rely on \cite[Proposition~14.19]{cox}
which states that two ordinary elliptic curves are isomorphic if and only if
their cardinalities and $j$-invariants are the same. Computing the cardinality
takes polynomial time, and since the $j$-invariant is a rational function in
the coefficients of a Weierstrass equation, it does not take longer to evaluate
it. In the following, it is implicitly understood that we work with isomorphism
classes via this representation.

To evaluate the action $\phi_\aaa(\E)$ of an ideal $\aaa$, we combine classical
tools:

\begin{algorithm}\label{alg:cm-act}
~\nopagebreak

\noindent
\begin{tabular}{rl}
   \textsc{Input:} & An elliptic curve $\E/\FF{q}$
                    with Frobenius polynomial $\chi_\pi$ and an ideal $\aaa$.
\\ \textsc{Output:} & The isogenous elliptic curve $\phi_\aaa(\E)$.
\smallskip
\\ 1. & Find a basis $(P_i)$ of the $\ell$-torsion of $\E$ over $\FF{q^{\ell-1}}$ where $\ell=\norm\aaa$.
\\ 2. & Write the matrix $M$ of the Frobenius endomorphism on the basis $(P_i)$.
\\ 3. & Compute the eigenspaces of $M\in\Mat{2}{\ZZ/\ell\ZZ}$.
\\ 4. & Determine which is the kernel of the isogeny $\phi_\aaa$.
\\ 5. & Compute this isogeny.
\end{tabular}\hfill
\end{algorithm}

Step~5 computes $\phi_\aaa$ from its kernel, which V\'{e}lu's formul\ae{} \cite{velu}
do in $O(\ell)$ curve operations over $\FF{q^{\ell-1}}$. Step~4 relies on an
idea from the SEA algorithm found in \cite[Stage~3]{galbraith-hess-smart}:

\begin{proposition}
Let $\aaa$ be an ideal of $\ord$ of prime norm $\ell$; write it as
$\ell\ord+u(\pi)\ord$ where the polynomial $u$ is an irreducible factor of
$\chi_\pi\bmod\ell$. The characteristic polynomial of the restriction to the
kernel of $\phi_\aaa$ of the Frobenius endomorphism is $u$.
\end{proposition}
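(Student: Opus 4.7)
The plan is to unfold the definition of $\phi_\aaa$ and identify its kernel as an eigenspace of Frobenius on $\E[\ell]$. First I would observe that, since we work exclusively with ideals of norm coprime to $\Delta$, the prime $\ell$ is unramified in $\ord$; moreover, because $\aaa$ has prime norm $\ell$, the prime $\ell$ cannot be inert in $\ord$ (that would only produce $\ell\ord$, of norm $\ell^2$). Hence $\ell$ splits in $\ord$, $\chi_\pi \bmod \ell$ factors into two distinct linear polynomials, and the irreducible factor $u$ is linear: $u(x) = x - \lambda$ for some $\lambda \in \ZZ/\ell\ZZ$.

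Next I would compute $\ker \phi_\aaa$ directly. From $\aaa = \ell\ord + u(\pi)\ord$ and the definition $\ker\phi_\aaa = \bigcap_{\alpha \in \aaa}\ker\alpha$, it follows that $\ker\phi_\aaa \subseteq \E[\ell]$ (since $[\ell] \in \aaa$), and in fact $\ker\phi_\aaa = \E[\ell] \cap \ker(\pi-\lambda)$, i.e.\ the $\lambda$-eigenspace of Frobenius on $\E[\ell] \simeq (\ZZ/\ell\ZZ)^2$. Since Frobenius has two distinct eigenvalues on this two-dimensional $\FF{\ell}$-space, this eigenspace is one-dimensional, which matches the degree count $|\ker\phi_\aaa| = \deg\phi_\aaa = \norm\aaa = \ell$.

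Finally, on this one-dimensional $\FF{\ell}$-subspace the Frobenius acts by multiplication by $\lambda$, so the characteristic polynomial of its restriction is $x - \lambda = u(x)$, as claimed.

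The only real subtlety is observing that, under the hypotheses of the proposition, $u$ is forced to be linear; once this is done the remainder reduces to elementary linear algebra on $\E[\ell]$, and I anticipate no significant obstacle. One might also verify that $\aaa$ is invertible and that $\ker\phi_\aaa$ has the expected size $\norm\aaa$, but both facts are automatic from the coprimality of $\norm\aaa$ with $\Delta$ recalled at the beginning of Section~\ref{sec:relation}.
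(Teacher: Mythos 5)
Your argument is correct, and it is worth noting that the paper itself offers no proof of this proposition at all: it simply attributes the statement to Stage~3 of the SEA-style algorithm of Galbraith, Hess, and Smart. Your write-up supplies the standard elementary justification — the hypothesis that $\norm\aaa=\ell$ forces $u$ to be linear, say $u(x)=x-\lambda$; coprimality of $\ell$ with $\Delta$ makes the two roots of $\chi_\pi\bmod\ell$ distinct, so $\ker\phi_\aaa=\E[\ell]\cap\ker(\pi-\lambda)$ is the one-dimensional $\lambda$-eigenspace, on which Frobenius acts as multiplication by $\lambda$ with characteristic polynomial $u$. This is exactly the content of the cited reference, so your proof is a faithful (and more self-contained) substitute for the paper's citation. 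The only implicit assumption you share with the paper is $\ell\neq\operatorname{char}\FF{q}$, needed so that $\E[\ell]\simeq(\ZZ/\ell\ZZ)^2$; this is harmless in context since the surrounding algorithm works over $\FF{q^{\ell-1}}$ with small primes $\ell$.
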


Since the map $\aaa\mapsto\aaa\ord$ from ideals of $\ZZ[\pi]$ preserves their
norm $\ell$ and polynomial $u$, there is no need to know $\ord$ to compute
$\phi_{\aaa\ord}$; this is particularly useful for $\ord=\End\E$.

Step~2 decomposes $\pi(P_i)$ as $\sum_{j\in{\{1,2\}}}M_{ij}P_j$ for which a
baby-step giant-step approach requires $O(\ell)$ operations in
$\E/\FF{q^{\ell-1}}$. Step~3 is classical and takes quasi-linear time in
$\log\ell$; it outputs the $\FF{q}$-rational subgroups of $\E[\ell]$ isomorphic
to $\ZZ/\ell\ZZ$.

Finally, Step~1 uses the fact that points of rational subgroups of order $\ell$
are necessarily defined over an extension of degree $\ell-1$; it proceeds by
selecting random $\ell^k$-torsion points over this extension and lifting one
along the other to obtain independent $\ell$-torsion points. This idea
originates from \cite[Theorem~1]{couveignes} to which we refer for details.

\begin{algorithm}
~\nopagebreak

\noindent
\begin{tabular}{rl}
   \textsc{Input:} & An elliptic curve $\E/\FF{q}$
                     with Frobenius polynomial $\chi_\pi$ and a prime $\ell$.
\\ \textsc{Output:} & A basis of the $\ell$-torsion $\E[\ell]$ of $\E$ over $\FF{q^{\ell-1}}$.
\smallskip
\\ a. & Decompose $\#\E(\FF{q^{\ell-1}})$ as $m\ell^k$ where $\ell\nmid m$.
\\ b. & Let $P$ and $Q$ be $m$ times random points of $\E(\FF{q^{\ell-1}})$;
\\ c. & Compute the order $\ell^{k_P}$ of $P$ and $\ell^{k_Q}$ of $Q$ and assume $k_P\geq k_Q$.
\\ d. & Precompute the table $(i,i\ell^{k_P-1}P)$ for $i\in\ZZ/\ell\ZZ$.
\\ e. & For $j$ from $k_Q-1$ down to $1$:
\\ f. & \qquad If $\ell^j Q=i\ell^{k_P-1}P$ for some $i$, set $Q\leftarrow Q-i\ell^{k_P-j-1}P$.
\\ g. & \qquad If $Q=0_\E$ then go back to Step~b.
\\ h. & Return $(\ell^{k_P-1}P,\ell^{k_Q-1}Q)$.
\end{tabular}\hfill
\end{algorithm}

The cardinality of $\E(\FF{q^{\ell-1}})$ can be computed as
$\Res{x}{\chi_\pi(x),x^{\ell-1}-y}(1)$; since it is $O(q^\ell)$, extracting
random points of it and multiplying them by $m$ requires $O(\ell\log q)$
operations in $\FF{q^{\ell-1}}$. Similarly, both $k_P$ and $k_Q$ are bounded by
$k=O(\ell\log q)$. The lookup in Step~f is negligible if an efficient data
structure such as a red-black tree is used to store the precomputed table of
Step~d. Finally, the probability of going back to Step~b is $O(1/\ell)$ as
proven in \cite{couveignes}.

Using fast arithmetic, operations in $\FF{q^{\ell-1}}$ take at most
$(\ell\log q)^{1+o(1)}$ time, so we have:

\begin{proposition}\label{prop:iso}
Algorithm~\ref{alg:cm-act} returns the curve $\phi_{\aaa\End\E}(\E)$ isogenous
to a prescribed curve $\E/\FF{q}$ in probabilistic time
$O(\ell^{2+o(1)}\log^{2+o(1)} q)$, where $\ell=\norm\aaa$.
\end{proposition}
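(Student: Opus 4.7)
The plan is to prove correctness and bound the running time of each step of Algorithm~\ref{alg:cm-act} in turn, and then sum the costs. Fast arithmetic gives a bound of $(\ell\log q)^{1+o(1)}$ for every field operation in $\FF{q^{\ell-1}}$ (and hence for every group operation in $\E(\FF{q^{\ell-1}})$), so the goal reduces to showing that each step performs at most $\ell^{1+o(1)}\log^{1+o(1)}q$ such operations.

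For correctness, Step~5 is handled by V\'elu's formul\ae{}; Step~4 uses the proposition immediately preceding the algorithm, together with the observation (already made in the text) that the formula $\ell\ord+u(\pi)\ord$ depends only on $\norm\aaa=\ell$ and on $u=\chi_\pi\bmod\ell$, not on $\ord$, so that we may legitimately compute $\phi_{\aaa\End\E}$ without knowing $\End\E$. Step~3 is classical linear algebra over $\FF\ell$, requiring only $\widetilde O(\log\ell)$ operations since we only need the $\FF{q}$-rational eigenspaces of a $2\times 2$ matrix. Step~2 follows from the standard baby-step/giant-step decomposition of $\pi(P_i)$ in terms of the basis $(P_i)$, which takes $O(\sqrt\ell)$ group operations per coefficient, well within budget.

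The main work, and the most delicate point, is Step~1: bounding the cost of the auxiliary torsion-basis subroutine. The cardinality $\#\E(\FF{q^{\ell-1}})=\Res{x}{\chi_\pi(x),x^{\ell-1}-y}(1)$ can be computed by a resultant in polynomial time in $\ell\log q$; writing it as $m\ell^k$ with $\ell\nmid m$ bounds $k=O(\ell\log q)$ since the group order is $O(q^\ell)$. Each random point $R\in\E(\FF{q^{\ell-1}})$ takes $\widetilde O(\ell\log q)$ field operations to generate; multiplication by $m$ and the subsequent order computations each use $O(k)=O(\ell\log q)$ group operations. The lifting loop (Steps~e--g) performs $O(k)$ iterations, each a table lookup among $\ell$ precomputed multiples of $P$, and the lookup itself is negligible if a balanced binary search tree is used. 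Overall this is $O(\ell\log q)$ group operations, i.e.\ $(\ell\log q)^{2+o(1)}$ bit operations per trial. The restart in Step~g occurs with probability $O(1/\ell)$ by the analysis in \cite{couveignes}, so the expected number of trials is $1+O(1/\ell)$ and does not affect the asymptotic bound.

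Summing, Steps~1, 2, and 5 each contribute at most $O(\ell^{2+o(1)}\log^{2+o(1)}q)$, while Steps~3 and 4 are lower-order. The principal obstacle in a fully rigorous write-up is the probabilistic analysis of Step~1: one must argue that a random point of $\E(\FF{q^{\ell-1}})$ multiplied by $m$ has $\ell$-power order with high enough probability, that its order is sufficiently large to permit lifting, and that restarts remain rare. These are exactly the statements borrowed from \cite[Theorem~1]{couveignes}, which we invoke as a black box; once accepted, the complexity bound stated in the proposition follows directly.
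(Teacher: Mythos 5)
Your proof follows essentially the same route as the paper: a step-by-step cost analysis in which each field operation in $\FF{q^{\ell-1}}$ costs $(\ell\log q)^{1+o(1)}$ via fast arithmetic, Step~1 (Couveignes's torsion-basis routine, invoked as a black box for the probabilistic claims) dominates with $O(\ell\log q)$ group operations, and the remaining steps are within budget. The only quibble is your claim of $O(\sqrt\ell)$ group operations per coefficient in Step~2 — the baby-step giant-step decomposition over the group $(\ZZ/\ell\ZZ)^2$ costs $O(\ell)$ as the paper states — but this is a lower-order term either way and does not affect the bound.
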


\section{Ascending the lattice of orders}\label{sec:ascend}

Orders in an imaginary quadratic field $K$ are of the form $\ZZ+f\ord_K$ for
some $f\in\NN$ known as the conductor; inclusion of orders corresponds to
divisibility of conductors. Those orders we are interested in contain
$\ZZ[\pi]$ so their conductors divide the index $[\ord_K:\ZZ[\pi]]$.

\bigskip

We will be ascending the lattice of orders one step at a time: each step
consists in enumerating all orders lying directly above a prescribed order,
that is, containing it with prime index $\ell$. The possible values for $\ell$
are the prime factors of $[\ord_K:\ZZ[\pi]]$ which can be listed by factoring
(the square-part of) the discriminant $\Delta$, for which the state-of-the-art
proven method of Lenstra and Pomerance \cite{lenstra-pomerance} uses
$L(q)^{1+o(1)}$ operations. Enumerating orders above (resp. below) then simply
amounts to dividing (resp. multiplying) the conductor by the possible $\ell$'s;
naturally, since our orders are to contain $\ZZ[\pi]$, this is subject to the
condition that the conductor remains a factor of the index $[\ord_K:\ZZ[\pi]]$.

Our strategy to locate the endomorphism ring in this lattice by testing orders
and ascending in corresponding directions works as follows: given some order
$\ord'$ contained in $\End\E$ (we start with $\ord'=\ZZ[\pi]$), find some order
$\ord$ directly above $\ord'$ which lies below $\End\E$; then replace $\ord'$
by $\ord$ and iterate the process. The ascension ends when no $\ord$ is
contained in $\End\E$; then, we must have $\End\E\simeq\ord'$. See
Figure~\ref{fig:ord-lattice} where we start from the bottom and ascend towards
orders $\ord$ for which the statement $\ord\subseteq\End\E$ holds.

\begin{figure}
\begin{center}
\includegraphics[width=6.6cm]{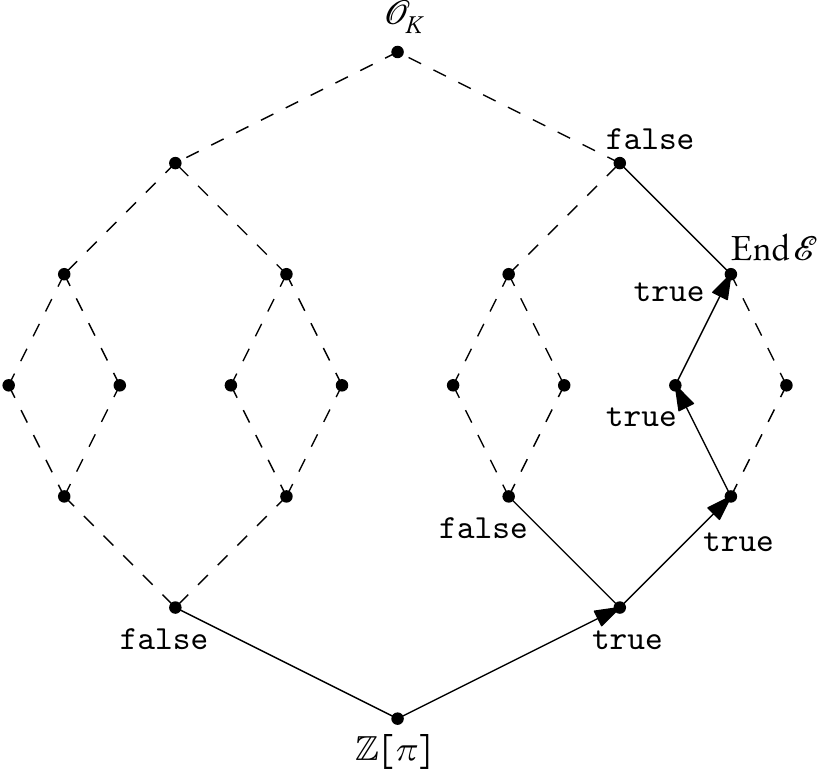}
\end{center}
\caption{Locating $\End\E$ by ascending a test-sequence of orders.}
\label{fig:ord-lattice}
\end{figure}

We formalize this procedure into:

\begin{algorithm}
~\nopagebreak

\noindent
\begin{tabular}{rl}
   \textsc{Input:} & An ordinary elliptic curve $\E$ over a finite field $\FF{q}$.
\\ \textsc{Output:} & An order isomorphic to the endomorphism ring of $\E$.
\smallskip
\\ 1. & Compute the Frobenius polynomial $\chi_\pi(x)$ of $\E$.
\\ 2. & Factor the discriminant $\Delta$ and construct the order $\ord'=\ZZ[\pi]$.
\\ 3. & For orders $\ord$ directly above $\ord'$:
\\ 4. & \qquad If $\ord\subseteq\End\E$ set $\ord'\leftarrow\ord$ and go to Step~3.
\\ 5. & Return $\ord'$.
\end{tabular}\hfill
\end{algorithm}

Steps~1 and 2 are classical and only require polynomial time in $\log q$,
except the factorization of $\Delta$ which takes $L(q)^{1+o(1)}$ time.
Under the GRH, we will later prove:

\begin{proposition}[GRH]\label{prop:test}
Let $\ord$ be an order above $\ZZ[\pi]$. One can determine whether
$\ord\subseteq\End\E$ in probabilistic time $L(q)^{1/\sqrt{2}+o(1)}$
with failure probability $o(1/log^2 q)$.
\end{proposition}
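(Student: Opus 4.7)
The plan is to use the \emph{relations} framework already set up: by Lemma~\ref{lemma:uprel}, if $\ord\subseteq\End\E$ then every relation of $\ord$ is also a relation of $\End\E$, and hence holds in the isogeny graph. Conversely, Sections~\ref{sec:class} and~\ref{sec:small} will establish that $\cl(\ord)$ is characterized by polynomially many ``short'' relations supported on small prime ideals, and that $\ord$ itself is recoverable from this class-group data, so that if a generating set of relations of $\ord$ all hold in the isogeny graph then $\ord\subseteq\End\E$ up to a controlled failure probability. The test thus reduces to producing such a generating set and checking each of its relations in the isogeny graph via Algorithm~\ref{alg:cm-act}.

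First I would fix a smoothness bound $B=L(q)^{\beta}$ for a suitable $\beta$ and take as factor base $\BB$ the prime ideals of $\ZZ[\pi]$ of norm at most $B$ coprime to $\Delta$. Under the GRH, the sharp expander-mixing bound of Jao, Miller, and Venkatesan \cite[Corollary~1.3]{expander-grh} ensures that short random walks on the Cayley graph of $\cl(\ord)$ generated by $\BB$ equidistribute; adapting to this rigorous setting the smoothness analyses of Seysen \cite{seysen} and Hafner--McCurley \cite{hafner-mccurley}, one can then harvest enough $B$-smooth relations of $\ord$ to generate its full relation lattice in probabilistic time $L(q)^{1/\sqrt2+o(1)}$ and with failure probability $o(1/\log^2 q)$.

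Second, for each relation $R$ of this generating set I would compose the isogenies $\phi_{\aaa\End\E}$ for $\aaa\in R$ starting from $\E$ via Algorithm~\ref{alg:cm-act} and test whether the result is again the isomorphism class of $\E$: a single mismatch certifies $\ord\not\subseteq\End\E$, while unanimous success certifies the inclusion by virtue of Sections~\ref{sec:class} and~\ref{sec:small}. By Proposition~\ref{prop:iso} each isogeny costs $L(q)^{2\beta+o(1)}$, and a standard balance against the relation-collection cost fixes $\beta$ so that the total runtime stays within $L(q)^{1/\sqrt 2+o(1)}$. The main obstacle — and the key departure from the heuristic analysis of \cite{bisson-sutherland} — will be replacing every smooth-ideal equidistribution assumption by a rigorous GRH bound through \cite{expander-grh}; a secondary subtlety is showing that the relation matrix produced is truly enough, via Sections~\ref{sec:class} and~\ref{sec:small}, to distinguish $\ord$ from every other candidate order in the lattice with the prescribed error $o(1/\log^2 q)$.
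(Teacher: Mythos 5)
Your overall framework (relations of $\ord$ checked in the isogeny graph via Algorithm~\ref{alg:cm-act}, with a cost balance between relation-finding and isogeny evaluation) matches the paper, but the central step is off: you propose to harvest a \emph{generating set} of the full relation lattice $\Lambda_\ord$ and to verify every generator in the isogeny graph. Since $\Lambda_\ord$ has full rank in $\ZZ^\BB$, any generating set has at least $\#\BB=L(q)^{\beta+o(1)}$ elements, and by the smoothness analysis each relation costs $L(q)^{1/(4\beta)+o(1)}$ to find; relation collection alone would therefore cost at least $L(q)^{\beta+1/(4\beta)+o(1)}\geq L(q)^{1+o(1)}$, already exceeding the claimed $L(q)^{1/\sqrt{2}+o(1)}$ --- and you would still have to certify (e.g.\ via the analytic class number formula, which you do not mention) that the harvested relations actually generate $\Lambda_\ord$, and then pay $L(q)^{2\beta+o(1)}$ per generator to check them all in the isogeny graph. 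The polynomial-size generating set of the Corollary in Section~\ref{sec:class} exists but is not algorithmically accessible; it is only used to bound the diameter of $\Lambda_\ord$. The paper sidesteps all of this: it draws only $3\log\log q$ \emph{quasi-random} relations of $\ord$ and invokes Lemma~\ref{lem:sub}, which says that such a relation also holds in $\ord'=\End\E$ with probability $[\Lambda_\ord:\Lambda_\ord\cap\Lambda_{\End\E}]^{-1}+o(1)$; when $\Lambda_\ord\not\subseteq\Lambda_{\End\E}$ this index is at least $2$, so all tests pass spuriously with probability at most $(1/2+o(1))^{3\log\log q}=o(1/\log^2 q)$. This one-sided probabilistic detection, rather than lattice generation, is the missing idea.

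A second, smaller omission: even granting $\Lambda_\ord\subseteq\Lambda_{\End\E}$, Proposition~\ref{prop:unique} shows this does \emph{not} always imply $\ord\subseteq\End\E$ --- there are exceptional pairs of orders with identical relation lattices, occurring at conductors $2$ and $3$. The paper's algorithm therefore adds an explicit step that checks the inclusion locally at $2$ and $3$ by climbing the isogeny volcano, which takes polynomial time. Your description of the test omits this step, so as written it would return \texttt{true} for some orders $\ord$ not contained in $\End\E$.
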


The number of orders directly above $\ZZ[\pi]$ (to be tested in Step~4) is the
number of prime factors of $[\ord_K:\ZZ[\pi]]$ and it decreases as $\ord'$
grows; the number of ascending steps (of times Step~3 is reached) is bounded by
the sum of the exponents in the factorization of $[\ord_K:\ZZ[\pi]]$ into prime
powers. These two quantities are smaller than $\log_2\Delta$ so the overall
number of tests is at most quadratic in $\log q$. As a consequence, we have:

\begin{theorem}[GRH]
The endomorphism ring of an ordinary elliptic curve defined over $\FF{q}$ can
be computed, with failure probability $o(1)$, in probabilistic time
$L(q)^{1+o(1)}+L(q)^{1/\sqrt{2}+o(1)}$ where the first term only accounts for
the complexity of factoring the discriminant $\Delta=O(q)$.
\end{theorem}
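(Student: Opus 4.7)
The plan is to carry out the complexity analysis of the main algorithm exactly as sketched in the paragraph preceding the statement, combining the ingredients already assembled (Schoof's algorithm, Lenstra--Pomerance factoring, and Proposition~\ref{prop:test}) and controlling the error by a union bound.

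First I would handle the preprocessing: Step~1 runs Schoof's algorithm, which has polynomial cost in $\log q$ and is therefore negligible; Step~2 requires factoring the discriminant $\Delta=t^2-4q=O(q)$, for which the Lenstra--Pomerance method runs in proven time $L(q)^{1+o(1)}$. This factorization is what produces the first term in the claimed bound, and it is the only step whose complexity is not dominated by $L(q)^{1/\sqrt{2}+o(1)}$.

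Next I would bound the number of calls to the testing subroutine. At each execution of Step~3, the orders directly above $\ord'$ correspond to the distinct prime divisors of $[\ord_K:\ord']$, which already divides $[\ord_K:\ZZ[\pi]]$ and is therefore at most $\log_2\Delta=O(\log q)$ in number. Moreover, each successful ascent from $\ord'$ to $\ord$ strictly decreases the conductor by a prime factor, so the number of times Step~3 is reached is bounded by the total number of prime factors (counted with multiplicity) of $[\ord_K:\ZZ[\pi]]$, again $O(\log q)$. Multiplying these two $O(\log q)$ bounds gives a total of $O(\log^2 q)$ calls to the order-testing subroutine. By Proposition~\ref{prop:test}, each such call runs in probabilistic time $L(q)^{1/\sqrt{2}+o(1)}$; since the polylogarithmic overhead is absorbed into the $o(1)$ in the exponent, the cumulative cost of all the tests is still $L(q)^{1/\sqrt{2}+o(1)}$. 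Adding this to the factorization cost yields the announced runtime $L(q)^{1+o(1)}+L(q)^{1/\sqrt{2}+o(1)}$.

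The only non-routine point is the correctness statement, which requires a union bound. Proposition~\ref{prop:test} guarantees each individual test fails with probability $o(1/\log^2 q)$; since the algorithm invokes it at most $O(\log^2 q)$ times, the total failure probability is $o(1)$, as required. I do not anticipate any serious obstacle here: the hard analytic work has been deferred into Proposition~\ref{prop:test} (to be established in Sections~\ref{sec:class} and~\ref{sec:small}), so this final theorem is essentially a bookkeeping argument that packages the lattice-ascending framework, the proven factoring bound, and the GRH-conditional test into a single complexity statement.
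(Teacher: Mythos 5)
Your proposal matches the paper's argument: it bounds the number of order-tests by $O(\log^2 q)$ via the prime factors of $[\ord_K:\ZZ[\pi]]$, charges each test $L(q)^{1/\sqrt{2}+o(1)}$ with failure probability $o(1/\log^2 q)$ by Proposition~\ref{prop:test}, applies a union bound, and isolates the $L(q)^{1+o(1)}$ factoring cost. This is exactly the bookkeeping the paper performs in the paragraphs surrounding the theorem.
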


The output may be unconditionally verified using the certification method of
\cite[Section~3.2]{bisson-sutherland}. This probabilistic procedure can be
adapted to use the isogeny-computing routing of Section~\ref{sec:iso} and the
proof material of Section~\ref{sec:class}; under the GRH, it then requires
$L(q)^{1/\sqrt{2}+o(1)}$ operations. As a result, we obtain an algorithm for
which the above theorem holds without the ``failure probability'' statement;
this is sometimes called a \emph{Las Vegas} algorithm.

\bigskip

The rest of this paper is devoted to the proof of Proposition~\ref{prop:test}.

\section{Class groups from short relations}\label{sec:class}

To test whether $\ord\subseteq\End\E$ reliably, we \emph{characterize} $\ord$
by a set of relations $R$ that hold in it but not collectively in any order of
the lattice not containing it. We will then test whether they hold in the
isogeny graph, so we seek relations $R$ for which the (quasi-\nobreak)quadratic
cost of computing the associated isogeny $\sum_{\aaa\in R}\norm\aaa^2$ is
small.

We start by bounding the norms of ideals to appear in our relations: form the
set $\BB$ of prime ideals $\pp$ of $\ZZ[\pi]$ with norm less than some integer
$N$ to be fixed later, and consider \emph{smooth} ideals
\[
\sigma(n)=\prod_{\pp\in\BB} \pp^{n_\pp}
\]
for vectors $n\in\ZZ^\BB$. If $\sigma_\ord(n)$ denotes the corresponding ideal
class in $\cl\ord$, the kernel of the map $\sigma_\ord$ is a lattice
$\Lambda_\ord$ in $\ZZ^\BB$ consisting of all relations of $\ord$ formed of
ideals in $\BB$: the coordinate $n_\pp$ is the multiplicity of the ideal $\pp$
in the relation. When $\sigma_\ord$ is surjective, we have
\[
\cl\ord\simeq\ZZ^\BB/\Lambda_\ord.
\]
Nothing of value is lost by only considering relations $R$ of $\Lambda_\ord$
since, assuming the GRH, Bach proved in \cite{bach-erh} that $\sigma_\ord$ is
indeed surjective provided that $N\geq 12\log^2|\Delta|$.

The isogeny chain associated to a relation $n\in\Lambda_\ord$ comprises at most
$\|n\|_1=\sum|n_\pp|$ isogenies of degree up to $N$ so the complexity of
evaluating it is crudely bounded by $\|n\|_1 N^{2+o(1)}$. This norm can be
controlled by a result of Jao, Miller, and Venkatesan
\cite[Corollary~1.3]{expander-grh} and more specifically its following
specialization found in \cite[Theorem~2.1]{quantum-iso}.

\begin{theorem}\label{th:grh}
Under the GRH, for all positive numbers $\epsilon$ there exists a constant
$c>1$ such that, for any imaginary quadratic order $\ord$ of discriminant $D$
and integers $N\geq\log^{2+\epsilon}|D|$ and
\[
l\geq c\frac{\log|D|}{\log\log|D|},
\]
the probability, for random vectors $n\in\ZZ^\BB$ of norm $l$, that the ideal
class $\sigma_\ord(n)$ falls in any subset $S$ of $\cl\ord$ is at least
$\frac{1}{2}\frac{\#S}{\#\cl\ord}$.
\end{theorem}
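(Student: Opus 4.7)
The plan is to reduce the statement to the expander-graph bounds of \cite{expander-grh} through a standard spectral / Cauchy--Schwarz argument on random walks, so that the only serious input is the spectral gap estimate itself.

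First, I would reinterpret the sampling of $n\in\ZZ^\BB$ of norm $l$ as an $l$-step random walk on the Cayley multigraph $\mathcal G$ of $\cl\ord$ with symmetric generating set $\sigma_\ord(\BB)\cup\sigma_\ord(\BB)^{-1}$: realise $n$ as an ordered word of $l$ letters drawn uniformly and independently from those generators. Then $\sigma_\ord(n)$ is precisely the endpoint, starting from the identity, of the corresponding walk. The hypothesis $N\geq\log^{2+\epsilon}|D|$ ensures via the effective prime ideal theorem that $\#\BB$ is of the size required by the next step.

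Next I would import the spectral input. Under the GRH, \cite{expander-grh} provides a quantitative upper bound on the second-largest absolute eigenvalue $\lambda<1$ of the normalized adjacency operator of $\mathcal G$, sharp enough that the walk mixes in $O(\log|D|/\log\log|D|)$ steps. This is the only place where the GRH genuinely enters (it is used through an effective Chebotarev-type character-sum estimate for $\cl\ord$), and it is the delicate heart of the statement; everything else is routine. Letting $p_l$ denote the distribution of $\sigma_\ord(n)$ on $\cl\ord$ and $u$ the uniform distribution, the spectral gap then gives $\|p_l-u\|_2\leq\lambda^l\|p_0-u\|_2\leq\lambda^l$, and Cauchy--Schwarz yields
\[
\Pr\bigl[\sigma_\ord(n)\in S\bigr]\;=\;\sum_{x\in S}p_l(x)\;\geq\;\frac{\#S}{\#\cl\ord}-\sqrt{\#S}\,\lambda^l
\]
for every subset $S\subseteq\cl\ord$. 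It therefore suffices to arrange $\sqrt{\#S}\,\lambda^l\leq\tfrac12\#S/\#\cl\ord$, i.e.\ $\lambda^l\leq\sqrt{\#S}/(2\#\cl\ord)$. The worst case is $\#S=1$, which reduces this to $\lambda^l\leq 1/(2\#\cl\ord)$; using the unconditional bound $\#\cl\ord=O(\sqrt{|D|}\log|D|)$, the spectral input of the previous paragraph guarantees this as soon as $l\geq c\log|D|/\log\log|D|$ for a large enough $c>1$ (depending on $\epsilon$), which is exactly the hypothesis.

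The main obstacle is, unsurprisingly, the spectral-gap input from \cite{expander-grh}; my contribution is only to package it into the probabilistic form of the statement. A small additional care is needed to match the precise "norm $l$" sampling convention with the random walk above, but this is a straightforward concentration argument and does not affect the asymptotics.
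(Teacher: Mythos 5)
The paper does not actually prove Theorem~\ref{th:grh}: it is imported verbatim from \cite[Corollary~1.3]{expander-grh} via the specialization \cite[Theorem~2.1]{quantum-iso}, so there is no in-paper proof to compare against, and your reconstruction --- realising a norm-$l$ vector as an $l$-step walk on the Cayley graph of $\cl\ord$ with generators $\sigma_\ord(\BB)^{\pm 1}$, invoking the GRH eigenvalue bound of \cite{expander-grh}, and converting the $\ell^2$ mixing estimate into the pointwise lower bound by Cauchy--Schwarz with worst case $\#S=1$ --- is precisely the argument of those cited references and is sound. The one point worth keeping explicit is quantitative: for a walk of length only $c\log|D|/\log\log|D|$ to drive $\lambda^l$ below $1/(2\#\cl\ord)=\Omega(|D|^{-1/2-o(1)})$, the normalized eigenvalue bound must be polynomially small in $\log|D|$ rather than merely $\lambda<1$; this is exactly what the hypothesis $N\geq\log^{2+\epsilon}|D|$ buys in \cite{expander-grh} and is why the constant $c$ depends on $\epsilon$.
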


\begin{corollary}[GRH]
For $N=\log^{2+\epsilon}|D|$ the diameter of the lattice $\Lambda_\ord$ is
$o(\log^{4+\epsilon}|D|)$.
\end{corollary}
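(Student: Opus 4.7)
The plan is to apply Theorem~\ref{th:grh} essentially directly, interpreting $\diam{\Lambda_\ord}$ as the covering radius of $\Lambda_\ord$ inside $\ZZ^\BB$ with respect to the $\ell_1$-norm — equivalently, via $\cl\ord\simeq\ZZ^\BB/\Lambda_\ord$, the maximum over classes $\gamma\in\cl\ord$ of the minimum $\ell_1$-norm of a preimage of $\gamma$ under $\sigma_\ord$.

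First I would fix $l=\lceil c\log|D|/\log\log|D|\rceil$, where $c$ is the constant produced by Theorem~\ref{th:grh} for the given $\epsilon$; the choice $N=\log^{2+\epsilon}|D|$ meets the hypothesis $N\geq\log^{2+\epsilon}|D|$. Then, for each class $\gamma\in\cl\ord$, applying the theorem to the singleton $S=\{\gamma\}$ yields that a random vector $n\in\ZZ^\BB$ of $\ell_1$-norm $l$ satisfies $\sigma_\ord(n)=\gamma$ with probability at least $\frac{1}{2\#\cl\ord}>0$. Positivity of this probability forces the existence of at least one preimage of $\gamma$ of $\ell_1$-norm exactly $l$.

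Ranging over all classes shows that every $\gamma\in\cl\ord$ admits a representative of $\ell_1$-norm at most $l$, whence $\diam{\Lambda_\ord}\leq l=O(\log|D|/\log\log|D|)$, which is a fortiori $o(\log^{4+\epsilon}|D|)$. I anticipate no real obstacle: the argument is a pigeonhole consequence of Theorem~\ref{th:grh} that does not exploit the full quantitative $\frac{1}{2}\#S/\#\cl\ord$ lower bound, only its positivity. The only remark worth making is that the bound stated in the corollary is polynomially looser than what this argument actually produces, presumably because that looseness suffices for the subsequent applications in Section~\ref{sec:small} and not because of a limitation of the technique.
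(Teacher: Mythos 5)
Your pigeonhole step is sound and is in fact exactly how the paper's own proof begins: applying Theorem~\ref{th:grh} to singletons $S=\{\gamma\}$ shows that every ideal class has a preimage under $\sigma_\ord$ of $\ell_1$-norm $l=O(\log|D|/\log\log|D|)$. But what you then bound is the covering radius of $\Lambda_\ord$ in $\ZZ^\BB$ (every coset has a short representative), and that is not the quantity the corollary is about. The ``diameter'' here, as both the paper's proof and the corollary's subsequent use make clear, measures $\Lambda_\ord$ by a generating set of \emph{relations}: one needs $O(\log^{2+\epsilon}|D|)$ explicit elements of $\Lambda_\ord$, each of $\ell_1$-norm $o(\log^2|D|)$, which together generate the whole lattice, for a total length $o(\log^{4+\epsilon}|D|)$. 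This is what justifies the coordinate bound $\log^{4+\epsilon}|D|$ in Step~2 of the relation-generating algorithm and feeds the Hafner--McCurley quasi-uniformity argument behind Lemma~\ref{lem:sub}; knowing only that every coset of $\Lambda_\ord$ has a short representative does not, as written, produce such a generating set.

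The missing content is precisely the paper's construction: fix a cyclic decomposition $\ZZ^\BB/\Lambda_\ord\simeq\prod\left<\alpha_i\right>$ with $O(\log|D|)$ factors, choose short preimages $\sigma_\ord^{-1}(\alpha_i^{2^j})$ (your pigeonhole step), and express both $\alpha_i^{\operatorname{ord}(\alpha_i)}=1$ and the decompositions $\pp=\prod\alpha_i^{n_i}$ by double-and-add, so that each resulting relation is a sum of $O(\log|D|)$ short preimages and hence has length $o(\log^2|D|)$. Your covering-radius bound could be upgraded to a generating-set bound (for instance, a telescoping argument on partial sums of unit vectors shows every $v\in\Lambda_\ord$ is a sum of lattice vectors of norm at most $2l+1$), but you neither flag the mismatch nor supply such a step --- and the fact that your reading makes the corollary an immediate triviality with a bound polynomially stronger than the one stated should have been a warning that you were proving a different statement from the one intended.
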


\begin{proof}
To prove this, we construct a generating set for $\Lambda_\ord$ formed by
$O(\log^{2+\epsilon}|D|)$ relations of norm $o(\log^2|D|)$. Siegel showed in
\cite{siegel} that $\cl\ord$ is an abelian group of order $D^{1/2+o(1)}$
so there exist $O(\log|D|)$ ideal classes $\alpha_i$ such that
$\ZZ^\BB/\Lambda_\ord\simeq\prod\left<\alpha_i\right>$; we fix these and
proceed to write a generating set for $\Lambda_\ord$ consisting of:
\begin{itemize}
\item relations expressing that $\alpha_i^{\operatorname{ord}(\alpha_i)}=1$;
\item relations expressing the primes $\pp\in\BB$ in terms of the $\alpha_i$.
\end{itemize}
First define a map $\sigma_\ord^{-1}$ by fixing a preimage of norm at most
$c\log|D|/\log\log|D|$ for each ideal class; it exists by Theorem~\ref{th:grh}.
Now use a double-and-add approach to ensure that norms remain small: for each
$i$, express that $\alpha_i^{\smash{\operatorname{ord}(\alpha_i)}}=1$ by the
relations
\begin{enumerate}
\item[(i)]
$\sigma_\ord^{-1}\left(\alpha_i^{2^j}\right)-2\sigma_\ord^{-1}\left(\alpha_i^{2^{j-1}}\right)$
for $j\in\{1,\ldots,\lfloor\log_2\operatorname{ord}(\alpha_i)\rfloor\}$;
\item[(ii)]
$\sum_j b_j\sigma_\ord^{-1}\left(\alpha_i^{2^j}\right)$
where $b_j$ denotes the $j$\textsuperscript{th} least significant bit
of $\operatorname{ord}(\alpha_i)$.
\end{enumerate}
Now write each $\pp\in\BB$ on the $\alpha_i$ by decomposing its class as a
product $\prod\alpha_i^{n_i}$ where
$n_i\in\{0,\ldots,\operatorname{ord}(\alpha_i)\}$; noting $\delta_\pp$ the
vector with coordinate one at $\pp$ and zero elsewhere, this gives the relations:
\begin{enumerate}
\item[(iii)] $\delta_\pp-\sum_i\sum_j c_{ij}\sigma_\ord^{-1}\left(\alpha_i^{2^j}\right)$
where $c_{\smash{ij}}$ is the $j$\textsuperscript{th} least significant bit of $n_i$.
\end{enumerate}
Preimages by $\sigma_\ord$ have length $o(\log|D|)$ and there are at most
$\sum\lfloor\log_2\operatorname{ord}(\alpha_i)\rfloor=O(\log|D|)$ terms,
therefore each such relation has length $o(\log|D|)^2$.
\end{proof}

To generate short relations, we simply plug this bound into the algorithm of
Seysen \cite{seysen} and rely on ingredients of Hafner and McCurley
\cite{hafner-mccurley} for the proof. Note that Childs, Jao, and Soukharev
\cite{quantum-iso} proposed a similar algorithm for finding one relation, while
we seek several random relations in order to characterize the order $\ord$.

\begin{algorithm}
~\nopagebreak

\noindent
\begin{tabular}{rl}
   \textsc{Input:} & An imaginary quadratic order $\ord$ of discriminant $D$.
\\ \textsc{Output:} & A quasi-random relation $n\in\Lambda_\ord$ with $\|n\|_1=o(\log^{6+\epsilon}|D|)$.
\smallskip
\\ 1. & Form the set $\BB$ of primes $\pp$ of $\ord$ with norm less than $N=L(q)^z$.
\\ 2. & Draw uniformly at random a vector $x\in\ZZ^\BB$ with coordinates
\\    & $|x_\pp|<\log^{4+\epsilon}|D|$ if $\norm{\pp}<\log^{2+\epsilon}|D|$, else $x_\pp=0$.
\\ 3. & Compute the reduced ideal representative $\aaa$ of $\sigma_\ord(x)$.
\\ 4. & If $\aaa$ factors over $\BB$ as $\prod \pp^{y_\pp}$ then return the vector $x-y$.
\\ 5. & Otherwise, go back to Step~2.
\end{tabular}\hfill
\end{algorithm}

\begin{proposition}[GRH]\label{prop:findrel}
Let $\ord$ be an order containing $\ZZ[\pi]$; its discriminant $D$ is then at
most $\Delta=O(q)$. The algorithm above requires
$L(q)^{z+o(1)}+L(q)^{1/(4z)+o(1)}$ operations to find a relation of $\ord$ whose
associated isogeny can be computed in time $L(q)^{2z+o(1)}$.
\end{proposition}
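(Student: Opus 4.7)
The plan is to separate three contributions to the runtime: building the factor base $\BB$, the cost per trial, and the expected number of trials. By the prime number theorem, $|\BB| = L(q)^{z+o(1)}$ and the base can be enumerated in that much time (a one-off expense). Each trial consists of sampling $x$, reducing the binary quadratic form associated with $\sigma_\ord(x)$ (polynomial in $\log|D|$), and smoothness-testing $\norm\aaa = O(\sqrt{|D|}) = O(\sqrt q)$ against primes of norm up to $L(q)^z$. For the last step I would use Lenstra's elliptic curve method, which strips off prime factors of this size in $L(q)^{o(1)}$ time per trial, so the per-trial cost is $L(q)^{o(1)}$.

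For the success probability, I would combine two ingredients. First, the Canfield--Erd\H{o}s--Pomerance theorem: with $x = \sqrt q$ and smoothness parameter $N = L(q)^z$, the usual calculation gives $u = \log x/\log N = (2z)^{-1}\sqrt{\log q/\log\log q}$ and hence $u\log u \sim (4z)^{-1}\sqrt{\log q\log\log q}$, yielding a smoothness density of $L(q)^{-1/(4z)+o(1)}$; by a Hafner--McCurley style counting argument, the same asymptotic governs the proportion of classes of $\cl\ord$ whose reduced representative has $N$-smooth norm. Second, Theorem~\ref{th:grh}, combined with Bach's GRH-surjectivity of $\sigma_\ord$ (applicable since the primes in the support of $x$ cover those of norm below $12\log^2|\Delta|$), shows that the distribution of $\sigma_\ord(x)$ lies within a factor two of uniform on $\cl\ord$. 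Thus each trial succeeds with probability $L(q)^{-1/(4z)+o(1)}$, the expected trial count is $L(q)^{1/(4z)+o(1)}$, and together with the factor base cost this gives the claimed $L(q)^{z+o(1)} + L(q)^{1/(4z)+o(1)}$.

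For the norm and isogeny bound, the coordinate restrictions in Step~2 and PNT give $\|x\|_1 = o(\log^{6+\epsilon}|D|)$, while $\|y\|_1 \leq \log_2\norm\aaa = O(\log|D|)$, so $\|x - y\|_1 = o(\log^{6+\epsilon}|D|)$. Plugging this into Proposition~\ref{prop:iso} with $N = L(q)^z$, the associated isogeny chain runs in $o(\log^{6+\epsilon}|D|) \cdot L(q)^{2z+o(1)} = L(q)^{2z+o(1)}$ operations.

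The hardest part will be the transfer from Theorem~\ref{th:grh}, which is formulated for vectors of a prescribed $\ell^1$-norm, to the coordinate-wise bounded distribution used by the algorithm. The natural fix is to condition on $\|x\|_1$ and apply the theorem shell by shell -- the typical shell norm far exceeds the threshold $c\log|D|/\log\log|D|$, and the conditioning loses only constant factors -- but carrying this out so that it interacts cleanly with the Hafner--McCurley counting of smooth ideal classes is the step requiring real care.
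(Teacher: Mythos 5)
Your decomposition --- factor-base construction, per-trial cost, expected number of trials via smoothness density, and the separate length/isogeny bound --- is exactly the paper's, and your two key ingredients (a Canfield--Erd\H{o}s--Pomerance-type density combined with a Hafner--McCurley/Seysen count of smooth ideal \emph{classes}, rather than treating $\norm\aaa$ naively as a random integer) are precisely what the paper invokes, citing \cite[Proposition~4.4]{seysen} and \cite{hafner-mccurley}. Your closing worry about reconciling Theorem~\ref{th:grh} (fixed $\ell^1$-norm) with the box sampling of Step~2 is legitimate; the paper does not resolve it inside this proof either, deferring the quasi-randomness of the generated relations to Lemma~\ref{lem:sub} via \cite[Lemma~2]{hafner-mccurley}.

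The one genuine defect is your choice of Lenstra's elliptic curve method for the smoothness test in Step~4. The ECM's runtime for extracting factors up to $L(q)^z$ is only conjectural (it rests on unproven assumptions about smooth numbers in Hasse intervals), so using it would reduce the proposition --- and hence the paper's headline result --- back to a heuristic statement, which defeats the stated purpose of obtaining a complexity proven under the GRH alone. The paper instead uses the rigorously analyzed hyperelliptic smoothness test of Lenstra, Pila, and Pomerance \cite[Corollary~1.2]{lenstra-pila-pomerance}, which provably runs in $\exp(\log^{2/3+o(1)}N)\log^3 q=L(q)^{o(1)}$ operations; substituting that reference makes your argument complete. (Note also that trial division over $\BB$ is not a viable fallback: at $L(q)^{z+o(1)}$ per trial it would unbalance the final optimization of $z$.)
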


\begin{proof}
Step~4 consists in testing the smoothness of (the norm of) $\aaa$;
Lenstra, Pila, and Pomerance \cite[Corollary~1.2]{lenstra-pila-pomerance}
proved this requires $\exp\left(\log^{2/3+o(1)}N\right)\log^3 q$ operations,
that is, $L(q)^{o(1)}$ since $N=L(q)^z$. The probability that this
factorization is successful, in other words, that the norm of $\aaa$ is
$N$-smooth is $L(q)^{1/(4z)+o(1)}$ provided that it behaves as a random
integer; this follows directly from combining the corollary above with
\cite[Proposition~4.4]{seysen}; see also \cite{hafner-mccurley}. The relation
involves $o(\log^{4+2+\epsilon} q)$ ideals of norm up to $L(q)^z$, whence the
time bound for evaluating the associated isogeny by Proposition~\ref{prop:iso}.
\end{proof}

Hopefully, the relations we generate discriminate between orders with distinct
class groups:

\begin{lemma}[GRH]\label{lem:sub}
Take any two orders $\ord$ and $\ord'$; a relation of $\ord$ generated by
the algorithm above has a probability
$[\Lambda_\ord:\Lambda_\ord\cap\Lambda_{\ord'}]^{-1}+o(1)$ of also holding in
$\ord'$.
\end{lemma}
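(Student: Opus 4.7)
The plan is to exploit the fact that the algorithm produces a relation $n=x-y_c$ whose image under $\sigma_{\ord'}$ can be read off from the joint distribution of $(\sigma_\ord(x),\sigma_{\ord'}(x))$. Writing $c=\sigma_\ord(x)$ for the smooth class reached on success and $y_c$ for the exponent vector of its reduced representative, we have $\sigma_\ord(y_c)=c$ and
\[
\sigma_{\ord'}(n) \;=\; \sigma_{\ord'}(x)\cdot\sigma_{\ord'}(y_c)^{-1}.
\]
Since $\sigma_\ord$ agrees on $x$ and $y_c$, both factors on the right lie in the same coset of the subgroup $H:=\sigma_{\ord'}(\Lambda_\ord)\leq\cl{\ord'}$, hence $\sigma_{\ord'}(n)\in H$. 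As $|H|=[\Lambda_\ord:\Lambda_\ord\cap\Lambda_{\ord'}]$, the lemma reduces to showing that $\sigma_{\ord'}(n)$ is quasi-uniformly distributed on $H$.

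The next step is to upgrade the single-group uniformity of Theorem~\ref{th:grh} to a joint uniformity statement for the map $\tau:=(\sigma_\ord,\sigma_{\ord'}):\ZZ^\BB\to\cl\ord\times\cl{\ord'}$. I would introduce the auxiliary order $\ord'':=\ord\cap\ord'$, which still contains $\ZZ[\pi]$ and has discriminant bounded by $\Delta$; since the extension of a principal invertible ideal remains principal, $\tau$ factors through $\sigma_{\ord''}$, so its image coincides with that of $\cl{\ord''}$ under the product of the two natural surjections. Theorem~\ref{th:grh} applied to $\ord''$ then yields that $\sigma_{\ord''}(x)$ is quasi-uniform on $\cl{\ord''}$, and since the projection $\cl{\ord''}\to I$ is a group homomorphism (hence has fibres of equal size), this pushes forward to quasi-uniformity of $\tau(x)$ on its image $I\leq\cl\ord\times\cl{\ord'}$. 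Conditioning on any fixed smooth class $c$ then gives a quasi-uniform distribution of $\sigma_{\ord'}(x)$ on the fibre over $c$, which is precisely the coset $\sigma_{\ord'}(y_c)\cdot H$; translating by $\sigma_{\ord'}(y_c)^{-1}$ transfers this to a quasi-uniform distribution of $\sigma_{\ord'}(n)$ on $H$ itself, and averaging over $c$ preserves uniformity, yielding $\Prob[\sigma_{\ord'}(n)=1]=|H|^{-1}+o(1)$.

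The main obstacle, I expect, is the quantitative form of ``quasi-uniform'' one can extract from Theorem~\ref{th:grh}: its statement gives only the one-sided lower bound $\tfrac12\#S/\#\cl\ord$, not an $o(1)$ approximation to the uniform distribution. I would address this by applying the theorem simultaneously to a subset and its complement so as to pin down a matching upper bound, or by boosting the vector-length parameter $l$ until the factor-two slack shrinks below $|H|^{-1}$; the box drawn in Step~2 already supplies ample slack, since its typical $\|\cdot\|_1$-norm exceeds $c\log|D|/\log\log|D|$ by a polylogarithmic factor. A secondary technicality is that the algorithm draws $x$ from a box of fixed support rather than from a sphere of fixed norm, but this amounts to an averaging over nearby values of $l$ that does not affect the quasi-uniformity conclusion.
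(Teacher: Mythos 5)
Your overall architecture is sound and is essentially the route the paper intends (its own proof is a one-line citation of Hafner--McCurley's Lemma~2 ``adapted to the context of our algorithm''): reduce the claim to quasi-uniformity of $\sigma_{\ord'}(n)$ on $H=\sigma_{\ord'}(\Lambda_\ord)$, note $\#H=[\Lambda_\ord:\Lambda_\ord\cap\Lambda_{\ord'}]$, and obtain the joint equidistribution of $(\sigma_\ord(x),\sigma_{\ord'}(x))$ by passing to the intersection order $\ord''=\ord\cap\ord'$, whose class group surjects onto both. The observation that the conditioning event (smoothness of the reduced representative) depends only on the class $c=\sigma_\ord(x)$, so that it does not bias the conditional distribution of $\sigma_{\ord'}(x)$ on the fibre, is exactly the point that makes the relations ``quasi-random'' in the sense needed.

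The genuine gap is the quantitative step you correctly flag as the main obstacle, and the first remedy you propose does not work. From the one-sided bound $\Prob[\sigma(x)\in S]\geq\frac{1}{2}\#S/\#\cl{\ord''}$ applied to $S$ and to its complement, the best upper bound you can extract is $\Prob[\sigma(x)\in S]\leq\frac{1}{2}+\frac{1}{2}\#S/\#\cl{\ord''}$, which for the singletons (or small fibres) relevant here is stuck near $\frac{1}{2}$ and can never produce the asserted $[\Lambda_\ord:\Lambda_\ord\cap\Lambda_{\ord'}]^{-1}+o(1)$; indeed neither the upper nor the lower direction of the lemma follows from the factor-$\frac{1}{2}$ statement of Theorem~\ref{th:grh}. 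Your second remedy --- lengthening the walk to kill the constant-factor slack --- is the right one, but it is not available from Theorem~\ref{th:grh} \emph{as stated}: you must invoke the underlying result of Jao, Miller, and Venkatesan in its full mixing form (the spectral gap makes the deviation from the uniform distribution decay exponentially once $l$ exceeds the mixing time, so a walk longer by a factor $O(\log\log|D|)$, which the box of Step~2 indeed affords, gives a \emph{pointwise} $(1+o(1))$-approximation to uniformity on $\cl{\ord''}$), or else reproduce Hafner and McCurley's character-sum estimates directly. A pointwise multiplicative bound is also what you need for the last step to be legitimate: the success probability is $L(q)^{-1/(4z)+o(1)}=o(1)$, so a mere $o(1)$ total-variation bound on the joint distribution would not survive conditioning on success, whereas a pointwise $(1+o(1))/\#I$ bound does. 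With that strengthened input made explicit, your argument goes through.
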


\begin{proof}
This follows directly from \cite[Lemma~2]{hafner-mccurley} adapted to the
context of our algorithm, which proves the quasi-randomness of the relations it
generates.
\end{proof}

\section{Orders from class groups}\label{sec:small}

Our proof of Proposition~\ref{prop:test} now boils down to exhibiting the
following.

\begin{algorithm}
~\nopagebreak

\noindent
\begin{tabular}{rl}
   \textsc{Input:} & An ordinary elliptic curve $\E/\FF{q}$ and an order $\ord\supseteq\ZZ[\pi]$.
\\ \textsc{Output:} & Whether $\ord\subseteq\End\E$.
\smallskip
\\ 1. & Compute a set of $3\log\log q$ relations of $\ord$.
\\ 2. & If one does not hold in the isogeny graph, return \texttt{false}.
\\ 3. & Check whether $\ord\subseteq\End\E$ locally at $2$ and $3$; if not, return \texttt{false}.
\\ 4. & Return \texttt{true}.
\end{tabular}\hfill
\end{algorithm}

By Proposition~\ref{prop:findrel}, Step~1 requires
$L(q)^{z+o(1)}+L(q)^{1/(4z)+o(1)}$ operations to find relations whose
associated isogenies are then evaluated by Step~2 in $L(q)^{2z+o(1)}$. To
balance these quantities, we set $z=1/2\sqrt{2}$ which gives an overall
complexity of $L(q)^{1/\sqrt{2}+o(1)}$.

The correctness follows from Lemma~\ref{lemma:uprel} and Theorem~\ref{th:cm},
in that Steps~1 and 2 determine whether
$\Lambda_\ord\subseteq\Lambda_{\End\E}$; the probability of failure is at most
$(2+o(1))^{-3\log\log q}=o(1/\log^2q)$, by Lemma~\ref{lem:sub} applied to
$\ord'=\End\E$. The proposition below argues that, combined with Step~3, this
really determines whether $\ord\subseteq\End\E$.

\begin{proposition}\label{prop:unique}
Let $\ord$ and $\ord'$ be two orders in an imaginary quadratic field $K$. The
lattice $\Lambda_{\ord'}$ contains $\Lambda_\ord$ if and only if the order
$\ord'$ contains $\ord$ or:
\begin{enumerate}
\setlength{\itemsep}{-2pt}
\item $K=\QQ(\sqrt{-4})$ and $\ord'$ has conductor $2$;
\item $K=\QQ(\sqrt{-3})$ and $\ord'$ has conductor $2$ or $3$;
\item The prime $2$ splits in $K$ and $\ord'$ has index $2$ in some order above $\ord$ of odd conductor.
\end{enumerate}
\end{proposition}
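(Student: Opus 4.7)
Let $\ord'' := \ord \cdot \ord'$ be the smallest order containing both; by Lemma~\ref{lemma:uprel}, $\Lambda_\ord$ and $\Lambda_{\ord'}$ are contained in $\Lambda_{\ord''}$, and the natural surjection $\cl{\ord'} \twoheadrightarrow \cl{\ord''}$ identifies with the quotient $\Lambda_{\ord''}/\Lambda_{\ord'}$. My plan is to reduce the statement to the claim that $\Lambda_\ord \subseteq \Lambda_{\ord'}$ holds precisely when this surjection is an isomorphism, and then to classify when this can occur via Cox's class number formula~\cite[Theorem~7.24]{cox}.

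For the ``if'' direction, the case $\ord'\supseteq\ord$ is Lemma~\ref{lemma:uprel} with $\ord''=\ord'$. In cases~1 and~2 I would take $\ord''=\ord_K$: the class number formula, combined with the exceptional unit indices $[\ord_K^*:\ord_f^*]\in\{2,3\}$, yields $h(\ord')=h(\ord_K)$, so the surjection $\cl{\ord'}\twoheadrightarrow\cl{\ord_K}$ is an isomorphism and hence $\Lambda_{\ord'}=\Lambda_{\ord_K}\supseteq\Lambda_\ord$. In case~3 the Euler factor $1-(d_K/2)/2=1/2$ at the split prime~$2$ cancels exactly the conductor ratio $f_{\ord'}/f_{\ord''}=2$, again giving $\cl{\ord'}\simeq\cl{\ord''}$ and the desired inclusion.

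The converse is where the real work is. The key reduction is: under the GRH, $\Lambda_\ord\subseteq\Lambda_{\ord'}$ forces the surjection $\cl{\ord'}\twoheadrightarrow\cl{\ord''}$ to be an isomorphism. Note that $\sigma_{\ord'}(\Lambda_\ord)$ always lies in $\ker(\cl{\ord'}\twoheadrightarrow\cl{\ord''})$, and the hypothesis makes this image trivial, so it suffices to show that $\sigma_{\ord'}(\Lambda_\ord)$ equals the whole kernel. I would establish this by identifying $\Gal{H_\ord H_{\ord'}/K}$ with the fibre product $\cl{\ord}\times_{\cl{\ord''}}\cl{\ord'}$ (from $H_\ord\cap H_{\ord'}=H_{\ord''}$, the standard Galois-theoretic intersection of ring class fields), then invoking an effective GRH-Chebotarev statement together with Bach's bound~\cite{bach-erh} to conclude that the joint map $\ZZ^\BB\to\cl{\ord}\times\cl{\ord'}$ surjects onto this fibre product. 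This Chebotarev/fibre-product step is the main obstacle, as it requires both the identification of the Galois group and an effective form of Chebotarev strong enough to apply within Bach's bound.

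Granted $\cl{\ord'}\simeq\cl{\ord''}$, the remainder is a routine case analysis. Setting
\[
\frac{h(\ord')}{h(\ord'')}=\frac{f_{\ord'}/f_{\ord''}}{[\ord_{\ord''}^*:\ord_{\ord'}^*]}\prod_{p\mid f_{\ord'},\,p\nmid f_{\ord''}}\left(1-\frac{(d_K/p)}{p}\right)=1
\]
and matching prime multiplicities on the two sides forces $f_{\ord'}$ and $f_{\ord''}$ to agree on primes they share, after which the residual equation is integral only when either a split prime $p=2$ appears to multiplicity exactly one (yielding case~3) or the unit-index denominator is non-trivial. The latter forces $\ord''=\ord_K$ with $K\in\{\QQ(i),\QQ(\sqrt{-3})\}$, and an enumeration of the small admissible values of $f_{\ord'}$ in each of these fields recovers cases~1 and~2.
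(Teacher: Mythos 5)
Your route is genuinely different from the paper's, and its endpoints are sound: the ``if'' direction via the class number formula is correct, and the closing case analysis (matching $h(\ord')=h(\ord'')$ against the conductor ratio, the Euler factors and the unit index) does recover exactly the three listed exceptions. The problem is the step you yourself flag as the main obstacle, and it is a genuine gap rather than a routine verification. To pass from $\Lambda_\ord\subseteq\Lambda_{\ord'}$ to the injectivity of $\cl{\ord'}\twoheadrightarrow\cl{\ord''}$ you need $\sigma_{\ord'}(\Lambda_\ord)$ to exhaust the kernel, i.e.\ you need the joint map $\ZZ^\BB\to\cl\ord\times\cl{\ord'}$ to surject onto the fibre product $\cl\ord\times_{\cl{\ord''}}\cl{\ord'}$. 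That is an effective Chebotarev statement for the compositum $L_\ord L_{\ord'}$ of the two ring class fields, restricted to the finitely many primes of $\BB$; Bach's result \cite{bach-erh} as cited gives surjectivity onto a single class group and does not supply this, so you would have to invoke, and carry out the discriminant bookkeeping for, a Lagarias--Odlyzko/Bach--Sorenson-type bound for the compositum. Until that is done the converse direction is unproven; moreover the resulting statement would be conditional on the GRH, whereas the proposition carries no GRH hypothesis and admits an unconditional proof.

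The paper sidesteps all of this by using far less of the hypothesis: from $\Lambda_\ord\subseteq\Lambda_{\ord'}$ it extracts only the rank-one (single-prime) relations, which say that every prime splitting into principal ideals of $\ord$ does so in $\ord'$; the classical Chebotarev density theorem then gives $L_{\ord'}\subseteq L_\ord$ (a Galois extension is determined by its set of split primes), hence the divisibility $\ff{L_{\ord'}/K}\mid\ff{L_\ord/K}$ of class-field-theoretic conductors, and the three exceptions drop out of the formula relating $\ff{L_\ord/K}$ to the conductor of $\ord$ (\cite[Exercises~9.20--9.23]{cox}) rather than out of the class number formula. The cheapest repair of your argument is to replace the fibre-product step by this splitting-set argument --- which needs no effective input --- and keep your class-number case analysis for the end; otherwise you must actually prove the fibre-product surjectivity under GRH.
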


Intuitively, this means that identifying orders by their class groups has a
single blind spot locally at $2$ and $3$ where the two biggest orders cannot be
distinguished; Step~3 is thus required in our algorithm to ensure it exactly
determines the endomorphism ring even amongst those orders with identical class
groups. This statement is a straightforward refinement of
\cite[Proposition~5]{bisson-sutherland}; we nevertheless give the proof below
for completeness.

\begin{proof}
Denote by $S_\ord$ (resp. $S_{\ord'}$) the set of primes $\ell$ that split into
principal ideals in $\ord$ (resp. $\ord'$). Using relations formed of a single
prime ideal, we see that $\Lambda_\ord\subseteq\Lambda_{\ord'}$ implies
$S_\ord\subseteq S_{\ord'}$. Now $S_\ord$ (resp. $S_{\ord'}$) is also the set
of primes that split completely in the ring class field $L_\ord$ of $\ord$
(resp. $L_{\ord'}$). By Chebotarev's density theorem $S_\ord\subseteq
S_{\ord'}$ thus implies $L_{\ord'}\subseteq L_\ord$ which means that the class
field theory conductor $\ff{L_{\ord'}/K}$ of $L_{\ord'}$ divides
$\ff{L_\ord/K}$.

This conductor $\ff{L_\ord/K}$ is related to that $f_\ord$ of $\ord$
as follows (see \cite[Exercises~9.20--9.23]{cox}).
\[\ff{L_\ord/K}=\begin{cases}
\ord_K, & \text{when $K=\QQ(\sqrt{-4})$ and $f_\ord=2$}, \\
\ord_K, & \text{when $K=\QQ(\sqrt{-3})$ and $f_\ord=2$ or $3$}, \\
u\ord_K, & \text{when $2$ splits in $K$ and $f_\ord=2u$ with $u$ odd}, \\
f_\ord\ord_K, & \text{otherwise}.
\end{cases}
\]
Naturally, the same stands for $\ord'$. In the latter case, the fact that
$\ff{L_\ord/K}$ divides $\ff{L_{\ord'}/K}$ implies that $f_{\ord'}$ divides
$f_\ord$, in other words $\ord\subseteq\ord'$; the three other cases
correspond, in order, to the exceptions listed in the proposition.
\end{proof}

Finally, let us address Step~3. To check whether $\ord\subseteq\End\E$ locally
at some prime $p$, one uses a method of Kohel \cite{kohel} known as ``climbing
the volcano'', which can be done in the traditional ``blind'' way by following
three $p$-isogeny paths from $\E$ and seeing which hits the ``floor of
rationality'' first, or using the more advanced technique of \cite{ionica-joux}
to directly determine the kernel of the ascending $p$-isogeny by pairing
computations. Eventually, both methods return the valuation at $p$ of the
conductor of $\End\E$ by computing at most
$O(\operatorname{val}_p[\ord_K:\ZZ[\pi]])$ isogenies of degree $p$; since we
use $p=2,3$, this takes polynomial time in $\log q$.

\section*{Acknowledgments}

This work owes everything to the numerous fruitful discussions the author had
with Karim Belabas, Andreas Enge, Pierrick Gaudry, Andrew V. Sutherland, and
Emmanuel Thom\'{e}. This paper also benefited from the remarks of Tanja Lange
who kindly proofread it.

\bibliography{document}

\end{document}